\newcommand{\prob}[1]{{\rm Prob}\left(#1\right)}
\newcommand{\p}[1]{{\rm Prob}\left(#1\right)}
\newcommand{\E}{\mathbb{E}}
\newcommand{\burl}[1]{\textcolor{blue}{\url{#1}}}
\numberwithin{equation}{section}
\newtheorem{thm}{Theorem}[section]
\newtheorem{cor}[thm]{Corollary}
\newtheorem{lem}[thm]{Lemma}
\theoremstyle{plain}
\newtheorem{defn}[thm]{Definition}
\newcommand\be{\begin{equation}}
\newcommand\ee{\end{equation}}
\newcommand\bea{\begin{eqnarray}}
\newcommand\eea{\end{eqnarray}}
\newcommand\bi{\begin{itemize}}
\newcommand\ei{\end{itemize}}
\newcommand\ben{\begin{enumerate}}
\newcommand\een{\end{enumerate}}
\newcommand\bc{\begin{center}}
\newcommand\ec{\end{center}}
\newcommand\ba{\begin{array}}
\newcommand\ea{\end{array}}
\newcommand{\e}{\varepsilon}
\newcommand{\Z}{\ensuremath{\mathbb{Z}}}
\newcommand{\N}{\mathbb{N}}
\newcommand\frakfamily{\usefont{U}{yfrak}{m}{n}}
\DeclareTextFontCommand{\textfrak}{\frakfamily}
\newcommand{\twocase}[5]{#1 \begin{cases} #2 & \text{{\rm #3}}\\ #4 &\text{{\rm #5}} \end{cases}}
\newcommand{\threecase}[7]{#1 \begin{cases} #2 & \text{{\rm #3}}\\ #4 &\text{{\rm #5}}\\ #6 & \texttt{{\rm #7}} \end{cases}}
\newcommand{\hr}[1]{\href{#1}{\url{#1}}}
\title{Benford Behavior of Zeckendorf Decompositions}
\author{Andrew Best}
\email{\textcolor{blue}{\href{mailto:ajb5@williams.edu)}{ajb5@williams.edu}}}
\address{Department of Mathematics and Statistics, Williams College, Williamstown, MA 01267}
\author{Patrick Dynes}
\email{\textcolor{blue}{\href{mailto:pdynes@clemson.edu)}{pdynes@clemson.edu}}}
\address{Department of Mathematical Sciences, Clemson University, Clemson, SC 29634}
\author{Xixi Edelsbrunner}
\email{\textcolor{blue}{\href{mailto:xe1@williams.edu}{xe1@williams.edu}}}
\address{Department of Mathematics and Statistics, Williams College, Williamstown, MA 01267}
\author{Brian McDonald}
\email{\textcolor{blue}{\href{mailto:bmcdon11@u.rochester.edu}{bmcdon11@u.rochester.edu}}}
\address{Department of Mathematics, University of Rochester, Rochester, NY 14627}
\author{Steven J. Miller}
\email{\textcolor{blue}{\href{mailto:sjm1@williams.edu, Steven.Miller.MC.96@aya.yale.edu}{sjm1@williams.edu,Steven.Miller.MC.96@aya.yale.edu} }}
\address{Department of Mathematics and Statistics, Williams College, Williamstown, MA 01267}
\author{Kimsy Tor}
\email{\textcolor{blue}{\href{mailto:ktor.student@manhattan.edu}{ktor.student@manhattan.edu}}}
\address{Department of Mathematics, Manhattan College, Riverdale, NY 10471}
\author{Caroline Turnage-Butterbaugh}
\email{\textcolor{blue}{\href{mailto:cturnagebutterbaugh@gmail.com}{cturnagebutterbaugh@gmail.com}}}
\address{Department of Mathematics, North Dakota State University ,Fargo, ND 58102}
\author{Madeleine Weinstein}
\email{\textcolor{blue}{\href{mailto:mweinstein@g.hmc.edu}{mweinstein@g.hmc.edu}}}
\address{Department of Mathematics, Harvey Mudd College, Claremont, CA 91711 }
\thanks{This research was conducted as part of the 2014 SMALL REU program at Williams College and was supported by NSF grants DMS 1347804 and DMS 1265673, Williams College, and the Clare Boothe Luce Program of the Henry Luce Foundation. It is a pleasure to thank them for  their support, and the participants there and at the 16\textsuperscript{th} International Conference on Fibonacci Numbers and their Applications for helpful discussions. }
\subjclass[2010]{11B39, 11B05, 60F05  (primary) 11K06, 65Q30, 62E20 (secondary)}
\keywords{Zeckendorf decompositions, Fibonacci numbers, positive linear recurrence relations, Benford's law}
\date{\today}
\begin{document}

\begin{abstract}
A beautiful theorem of Zeckendorf states that every integer can be written uniquely as the sum of non-consecutive Fibonacci numbers $\{ F_i \}_{i = 1}^{\infty}$. A set $S \subset \Z$ is said to satisfy Benford's law if the density of the elements in $S$ with leading digit $d$ is $\log_{10}{(1+\frac{1}{d})}$; in other words, smaller leading digits are more likely to occur. We prove that, as $n\to\infty$, for a randomly selected integer $m$ in $[0, F_{n+1})$ the distribution of the leading digits of the Fibonacci summands in its Zeckendorf decomposition converge to Benford's law almost surely. Our results hold more generally, and instead of looking at the distribution of leading digits one obtains similar theorems concerning how often values in sets with density are attained.
\end{abstract}

\maketitle
\tableofcontents


\section{Introduction}


\subsection{History}\hspace*{\fill} \\

The Fibonacci numbers have fascinated professional mathematicians and amateurs for centuries. The purpose of this article is to review the connection between two interesting results, namely Zeckendorf's theorem and Benford's law of digit bias, and to discuss density results that arise in special subsets of the Fibonacci numbers.

A beautiful theorem due to Zeckendorf \cite{Ze} states that every positive integer may be written uniquely as a sum of non-adjacent Fibonacci numbers. The standard proof is by straightforward induction and the greedy algorithm (though see \cite{KKMW} for a combinatorial approach). For this theorem to hold we must normalize the Fibonacci numbers by taking $F_1 = 1$ and $F_2 = 2$ (and of course $F_{n + 1} = F_{n} + F_{n - 1}$), for if our series began with two $1$'s or with a 0 the decompositions of many numbers into non-adjacent summands would not be unique.

In 1937 the physicist Frank Benford \cite{Benf}, then working for General Electric, observed that the distributions of the leading digits of numbers in many real and mathematical data sets were not uniform. In fact, the leading digits of numbers from various sources such as atomic weights, baseball statistics, numbers in periodicals and values of mathematical functions or sequences seemed biased towards lower values; for instance, a leading digit of $1$ occurred about $30\%$ of the time, while a leading digit of $9$ occurred less than $5\%$ of the time. We now say a data set satisfies Benford's law (base $B$) if the probability of a first digit base $B$ of $d$ is $\log_B(1  + 1/d)$, or more generally the probability that the significand\footnote{If $x > 0$ we may write $x = S_B(x) 10^{k(x)}$, where $S_B(x) \in [1, B)$ is the significand and $k(x) \in \mathbb{Z}$ is the exponent.} is at most $s$ is $\log_B(s)$. Benford's law has applications in disciplines ranging from accounting (where it is used to detect fraud) to zoology and population growth, and many areas between. While this bias is often initially surprising, it is actually very natural as Benford's law is equivalent to the logarithms of the set being equidistributed modulo 1. For more on Benford's law see \cite{Hi1,Hi2,MT-B,Rai}, as well as \cite{Miller} for a compilation of articles on its theory and applications.

Obviously, we would not be discussing Benford's law if it had no connection to the Fibonacci numbers. A fascinating result, originally published in \cite{BD} (see also \cite{MT-B,Washington}), states that the Fibonacci numbers follow Benford's law of digit bias.\footnote{The main idea of the proof is to note that $\log_{10}\left(\frac{1+\sqrt{5}}{2}\right)$ is irrational, and then use Weyl's criterion and Binet's formula to show the logarithms of the Fibonacci numbers converge to being equidistributed modulo 1.} There are many questions that may be asked concerning the connection between the Fibonacci numbers and Benford's law. This research was motivated by the study of the distribution of leading digits of Fibonacci summands in Zeckendorf decompositions. Briefly, our main result is that the distribution of leading digits of summands in Zeckendorf decompositions converges to Benford's law. Our result is more universal, and in fact holds for special sequences with density. We first set some notation, and then precisely state our results.

\subsection{Preliminaries}\hspace*{\fill} \\

Let $S \subset \{F_i\}_{i = 1}^{\infty}$, and let $q(S, n)$ be the density of $S$ over the Fibonacci numbers in the interval $[0, F_n]$. That is,
\begin{align}
q(S,n) \ = \  \frac{\#\{F_{i} \in S \ : \ 1 \leq i \leq n\}}{n}.
\end{align}
When $\lim_{n \rightarrow \infty}{q(S,n)}$ exists, we define the \emph{asymptotic density} $q(S)$ as
\begin{align}
q(S) \ := \ \lim_{n \rightarrow \infty}{q(S,n)}.
\end{align}

For the sake of completeness, we define a mapping between the positive integers and their Zeckendorf decompositions. We first note that a \emph{legal} Zeckendorf decomposition is the unique decomposition of a number into non-adjacent Fibonacci numbers.

\begin{defn}
Let $m \in \N$. The function $\text{ZD}$ injectively maps each $m \in \N$ to the set of its Zeckendorf summands. Conversely, $\text{ZD}^{-1}$ injectively maps each legal set of Zeckendorf summands to the positive integer that set represents.
\end{defn}

For example, $\text{ZD}(10) = \{2,8\}$ and $\text{ZD}^{-1}(\{8,34\}) = 42$; however, $\text{ZD}^{-1}(\{8,13\})$ is undefined, as $21 = 8 + 13$ is not a legal Zeckendorf decomposition.

Let $m \in \N$ be chosen uniformly at random from the interval $[0, F_{n+1})$. We define two useful random variables:
\begin{align}
X_n(m) \ := \ \# \text{ZD}(m), \ \ \ \ \ Y_n(m) \ := \ \# \text{ZD}(m)  \cap S.
\end{align}
In our main result, we show that the density of $S$ in a typical Zeckendorf decomposition is asymptotic to the density of $S$ in the set of Fibonacci numbers.


\begin{thm}[Density Theorem for Zeckendorf Decompositions]\label{thm:DensityThm1} Let $S \subset \{ F_i \}_{i = 1}^{\infty}$ with asymptotic density $q(S)$ in the Fibonacci numbers. For $m \in \N$ chosen uniformly at random from the interval $[0, F_{n + 1})$, let $X_n(m)$ and $Y_n(m)$ be defined as above.  Then for any $\e>0$, we have with probability $1+o(1)$ that
\begin{align}
\left|\frac{Y_n(m)}{X_n(m)} \ - \ q(S)\right| \ < \ \e.
\end{align}
\end{thm}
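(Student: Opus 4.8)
The plan is to control the numerator $Y_n(m)$ and the denominator $X_n(m)$ separately, each concentrating around its mean, and then combine. The key known facts about $X_n$ come from the Lekkerkerker-type results: for $m$ uniform in $[0,F_{n+1})$, the expected number of Zeckendorf summands satisfies $\E[X_n] = c n + O(1)$ with $c = 1/(\varphi^2+1)$ (where $\varphi$ is the golden ratio), and moreover $X_n$ is strongly concentrated, with variance of order $n$, so that $X_n/n \to c$ almost surely (and certainly in probability, which is all we need for a ``probability $1+o(1)$'' statement). Thus it suffices to show that $Y_n(m)/n \to c\, q(S)$ in probability, since then $Y_n/X_n = (Y_n/n)/(X_n/n) \to q(S)$.

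First I would write $Y_n(m) = \sum_{i=1}^{n} \mathbf{1}[F_i \in S]\, \mathbf{1}[F_i \in \mathrm{ZD}(m)]$ and compute $\E[Y_n]$. The probability that $F_i$ appears in the Zeckendorf decomposition of a random $m \in [0,F_{n+1})$ is a ratio of counts of legal decompositions (number of legal decompositions of integers in $[0,F_{n+1})$ that use $F_i$, divided by $F_{n+1}$); standard transfer-matrix / Fibonacci-counting arguments show this probability tends to a constant $p_\infty$ independent of $i$ as both $i$ and $n-i$ grow, and that the contributions from the ``edge'' indices (those within $o(n)$ of $1$ or $n$) are negligible. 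Combined with the fact that $\#\{i \le n : F_i \in S\} = q(S)\, n + o(n)$, a summation-by-parts / Abel-summation argument (splitting $[1,n]$ into a bulk where the inclusion probability is essentially $p_\infty$ and negligible edges) gives $\E[Y_n] = q(S)\, p_\infty\, n + o(n)$. Since $Y_n \le X_n$ and $\E[X_n] = cn + O(1)$ with the same underlying inclusion probabilities, one identifies $p_\infty = c$, so $\E[Y_n] = q(S)\, c\, n + o(n)$.

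Next I would establish concentration of $Y_n$ about its mean. The cleanest route is a second-moment estimate: bound $\mathrm{Var}(Y_n)$ by analyzing the pairwise correlations $\p{F_i \in \mathrm{ZD}(m),\ F_j \in \mathrm{ZD}(m)} - \p{F_i \in \mathrm{ZD}(m)}\p{F_j \in \mathrm{ZD}(m)}$, which decay geometrically in $|i-j|$ because the event that $F_i$ is used depends (up to exponentially small corrections) only on the local structure of the decomposition near index $i$. Summing, $\mathrm{Var}(Y_n) = O(n)$, so by Chebyshev $\p{|Y_n - \E[Y_n]| > \e n} \to 0$, hence $Y_n/n \to c\,q(S)$ in probability. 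Alternatively, one can avoid recomputing correlations from scratch by noting $0 \le Y_n \le X_n$ and leveraging the already-known concentration of $X_n$ together with a monotonicity/interlacing argument, but the direct variance bound is more transparent.

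Finally, combine: on the event $\{X_n \ge cn/2\} \cap \{|X_n - cn| < \delta n\} \cap \{|Y_n - c\,q(S)\,n| < \delta n\}$, which has probability $1 - o(1)$ for any fixed $\delta > 0$, elementary estimates give $|Y_n/X_n - q(S)| < \e$ provided $\delta$ is chosen small enough in terms of $\e$ and $q(S)$. The main obstacle is the first step: getting a clean asymptotic for the single-summand inclusion probability $\p{F_i \in \mathrm{ZD}(m)}$ that is \emph{uniform} in $i$ (with explicit control of the edge effects near $i=1$ and $i=n$), since the Abel-summation step that converts ``density $q(S)$ of $S$ among the $F_i$'' into ``density $q(S)$ of $S$ among the summands'' needs this uniformity to push the $o(n)$ error through. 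Once that uniform local estimate is in hand, the variance bound and the final combination are routine.
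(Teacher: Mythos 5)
Your proposal is correct in outline, but it takes a genuinely different route from the paper. The paper never analyzes the summand statistics of the uniform measure directly: it introduces a random legal decomposition $A_n(p)$ built by a sequential rule, shows (Lemma \ref{lem:randprocfindingprobtwocases}) that for the special choice $p=\varphi^{-2}$ the law of $\text{ZD}^{-1}(A_n(\varphi^{-2}))$ gives each $m\in[0,F_{n+1})$ probability $\varphi^{-n}$ or $\varphi^{-(n+1)}$ --- i.e., is within a bounded multiplicative factor of uniform --- and then proves the density statement for the random process (Theorem \ref{thm:DensityThm2}) by first- and second-moment bounds that are easy because the process has an explicit renewal structure ($\p{F_j\in A(p)\mid F_i\in A(p)}=\p{F_{j-i-1}\in A(p)}$). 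The bad set $E(n,\e)$ then has $o(1)$ probability under both measures. You instead attack the uniform measure head-on: the inclusion probabilities are ratios of counts of legal decompositions, namely $\p{F_i\in\text{ZD}(m)}=F_{i-1}F_{n-i}/F_{n+1}$ and $\p{F_i,F_j\in\text{ZD}(m)}=F_{i-1}F_{j-i-1}F_{n-j}/F_{n+1}$ up to boundary conventions, and Binet's formula gives both the uniform bulk constant $p_\infty=1/(\varphi^2+1)=c$ (matching the paper's $p/(1+p)$ at $p=\varphi^{-2}$) and the geometric decay of covariances in $\min(i,j-i,n-j)$ that yields $\mathrm{Var}(Y_n)=O(n)$. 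Both routes work. Yours is a self-contained Lekkerkerker-style enumeration that avoids the measure-comparison step entirely; the paper's buys a reusable standalone result (Theorem \ref{thm:DensityThm2} holds for every $p$, not just $\varphi^{-2}$) and makes the covariance computation essentially automatic. The one place you must not wave your hands is exactly the place you flag: the product formulas above and their uniformity in $i$ are what make the Abel-summation step and the $O(n)$ variance bound legitimate, so they should be written out; once they are, they reduce to standard Fibonacci identities and there is no gap, only deferred bookkeeping.
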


We now define a method of constructing a random Zeckendorf decomposition, which plays a central role in our proofs. Essentially, we want to select a random subset of the Fibonacci numbers which satisfy the criterion of being a legal Zeckendorf decomposition. We fix a probability $p \in (0,1)$ and let $A_{n}(p)$ be a random subset of Fibonacci numbers at most $F_n$. Let $A_{0}(p) = \emptyset$, and define $A_{n}(p)$ recursively for $n>0$ as follows. We set
\begin{align} \label{eq:RandomProcess}
\threecase{A_{n}(p) \ = \ }{A_{n - 1}(p)}{if $F_{n - 1} \ \in \ A_{n - 1}(p)$}{A_{n - 1}(p) \cup F_n}{with probability $p$ if  $F_{n - 1} \ \notin \ A_{n - 1}(p)$}{A_{n - 1}(p)}{otherwise,}
\end{align} and define
\begin{align}
A(p) \ := \ \bigcup\limits_{n}{A_{n}(p)}.
\end{align}
This random process leads to the following result.

\begin{thm}[Density Theorem for Random Decompositions]\label{thm:DensityThm2}
Let $S \subset \{F_i\}_{i = 1}^{\infty}$ have asymptotic density $q(S)$ over the Fibonacci numbers. Then, with probability $1$, $S \cap A(p)$ has asymptotic density $q(S)$ in $A(p)$.
\end{thm}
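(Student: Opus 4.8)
The plan is to analyze the random process \eqref{eq:RandomProcess} by showing that, with probability $1$, both the count of Fibonacci summands selected up to index $n$ and the count of \emph{those summands lying in $S$} grow linearly in $n$, with the ratio of growth rates equal to $q(S)$. First I would study the unconditioned process: let $N_n := \#(A_n(p) \cap \{F_1,\dots,F_n\})$ be the total number of summands chosen by stage $n$, and let $M_n := \#(A_n(p) \cap S \cap \{F_1,\dots,F_n\})$. The indicator $I_i = \ch_{F_i \in A(p)}$ is $1$ with probability that depends only on whether $F_{i-1}$ was chosen; in fact $(I_i)$ is a two-state Markov chain (once $I_{i-1}=1$ we must have $I_i=0$, and from state $0$ we move to state $1$ with probability $p$). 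This chain is ergodic with a unique stationary distribution assigning mass $\pi_1 = p/(1+p)$ to the ``selected'' state. Hence by the ergodic theorem (or a direct strong law for the $1$-dependent, in fact Markov, sequence $(I_i)$), $N_n/n \to p/(1+p)$ almost surely.

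The key step is to handle $M_n = \sum_{i \le n} I_i \ch_{F_i \in S}$, where the set $S$ is a fixed deterministic set of Fibonacci numbers with $\#\{i \le n : F_i \in S\}/n \to q(S)$. Since $S$ is deterministic, the summands $I_i \ch_{F_i \in S}$ are a ``masked'' version of the $I_i$: we are simply restricting the sampling of the Markov chain to the (deterministic, density-$q(S)$) subsequence of indices with $F_i \in S$. The natural tool is a strong law for weighted/subsequence sums of a positive-recurrent Markov chain: I would write $M_n = \sum_{i\le n} c_i I_i$ with $c_i = \ch_{F_i \in S}\in\{0,1\}$, decompose $I_i = \pi_1 + (I_i - \pi_1)$, so that $M_n = \pi_1\sum_{i\le n} c_i + \sum_{i\le n} c_i (I_i - \pi_1)$. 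The first piece is $\pi_1 \cdot (q(S)+o(1))\, n$ by definition of $q(S)$. For the second piece I would show the error sum is $o(n)$ a.s.; because $(I_i - \pi_1)$ has strong mixing (geometric, being a finite ergodic Markov chain) and the weights $c_i$ are bounded, a standard maximal inequality plus Borel--Cantelli along a geometrically growing subsequence of $n$ (e.g. $n_k = \lfloor \theta^k\rfloor$) gives $\frac1n\sum_{i\le n} c_i(I_i - \pi_1) \to 0$ a.s. Combining, $M_n/n \to \pi_1 q(S)$ a.s. Dividing, on the full-probability event where both limits hold and $N_n \to \infty$,
\begin{align}
\frac{M_n}{N_n} \ = \ \frac{M_n/n}{N_n/n} \ \longrightarrow\ \frac{\pi_1 q(S)}{\pi_1} \ = \ q(S),
\end{align}
which is exactly the assertion that $S \cap A(p)$ has asymptotic density $q(S)$ in $A(p)$ (here one must also note that the denominator in the density of $S\cap A(p)$ inside $A(p)$ is $N_n$, the number of elements of $A(p)$ among the first $n$ Fibonacci numbers, and that $N_n\to\infty$ a.s. so the limit is genuinely along all of $A(p)$).

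I expect the main obstacle to be the second-piece estimate: making rigorous that $\frac1n\sum_{i\le n} c_i (I_i - \pi_1)\to 0$ almost surely for an \emph{arbitrary} bounded deterministic weight sequence $(c_i)$, rather than in probability or in $L^2$. The $L^2$ bound $\E\big[(\sum_{i\le n} c_i(I_i-\pi_1))^2\big] = O(n)$ follows easily from geometric decay of the Markov correlations $\E[(I_i-\pi_1)(I_j-\pi_1)] = O(\rho^{|i-j|})$, uniformly in the $c_i$ since $|c_i|\le 1$; upgrading this to an almost-sure statement requires the subsequence-plus-monotonicity (or maximal-inequality) argument, and a little care that the "gaps" between consecutive $n_k$ contribute negligibly. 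An alternative, perhaps cleaner, route is to observe that the successful-selection times form a renewal process with i.i.d.\ geometric inter-arrival gaps, index the chosen Fibonacci numbers as $F_{j_1} < F_{j_2} < \cdots$, and apply the strong law directly to $\frac1k\#\{\ell\le k : F_{j_\ell}\in S\}$ using that $j_\ell/\ell \to (1+p)/p$ a.s.; but this still reduces to controlling how a density-$q(S)$ set intersects a random arithmetic-progression-like subsequence, so the same estimate reappears. Either way, once that lemma is in hand the theorem follows immediately.
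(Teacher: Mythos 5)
Your proposal is correct, and its core is the same as the paper's: you estimate the first and second moments of the number of selected indices up to $n$ (your $N_n$, the paper's $W_n$) and of the selected indices lying in $S$ (your $M_n$, the paper's $Z_n$), exploiting the geometric decay of correlations coming from the two-state Markov structure of the selection process (the paper derives $\prob{F_k \in A(p)} = \frac{p}{1+p} + O(p^k)$ by a scalar recurrence rather than by naming the chain, but the covariance estimates are identical), and then take the ratio. Where you genuinely depart from the paper is the last step. The paper applies Chebyshev to $W_n$ and $Z_n$ separately and records that each concentration event holds with probability $1+o(1)$; its error probabilities ($O(n^{-1/9})$ for $W_n$, and ${\rm Var}(Z_n)^{1/2}/\E[Z_n]=o(1)$ with no rate for $Z_n$) are not summable, so as written the paper's argument yields convergence in probability of $Z_n/W_n$ to $q(S)$, and the passage to the stated almost-sure limit is not spelled out. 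Your plan --- an $L^2$ bound of order $O(n)$, Borel--Cantelli along a geometric subsequence $n_k=\lfloor \theta^k\rfloor$, boundedness of the increments to control the gaps between consecutive $n_k$, and then $\theta\to 1$ --- is exactly the standard way to close that gap and delivers the almost-sure conclusion the theorem actually asserts. (One small point to verify when you execute it: the chain is not started in stationarity, so the correlation bound should be stated as $\E[(I_i-\pi_1)(I_j-\pi_1)] = O(p^{|i-j|}) + O(p^{\min(i,j)})$, which still sums to $O(n)$.) In short, your route costs one extra routine lemma but proves the stronger statement; the paper's shorter route establishes the in-probability version, which is all that is used downstream in the proof of Theorem \ref{thm:DensityThm1}.
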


We use Theorem \ref{thm:DensityThm2} with the clever choice of probability of $p=1/\varphi^2$ to prove Theorem \ref{thm:DensityThm1}. The reason for this choice is that this random Zeckendorf decomposition is similar to the Zeckendorf decomposition of an integer chosen uniformly at random.

We now describe some situations where Theorem \ref{thm:DensityThm2} applies. There are many interesting situations where $S \subset \{F_i\}_{i = 1}^{\infty}$ has a limiting density over the Fibonacci numbers. As the Fibonacci numbers follow Benford's law, the set $S_d$ of Fibonacci number with a fixed leading digit $1 \leq d \leq 9$ has asymptotic density $q(S_d) = \log\left({1 + 1/d}\right)$ in the Fibonacci numbers. By an extension of Benford's law, the Fibonacci numbers in which a finite amount of leading digits are fixed also have asymptotic density over the Fibonacci numbers. Conversely, we could fix a finite set of digits at the right and obtain similar results. For example, if we look at the Fibonacci numbers modulo $2$ we get $1, 0, 1, 1, 0, 1, 1, 0, \dots$; thus in the limit one-third of the Fibonacci numbers are even, and the asymptotic density exists. These arguments immediately imply Benford behavior of the Zeckendorf decompositions.

\begin{cor}[Benford Behavior in Zeckendorf Decompositions]\label{cor:zeckdecomparebenf} Fix positive integers $D$ and $B$, and let \be \mathcal{D}_D \ := \ \{(d_1, \dots, d_D): d_1 \ge 1, d_i \in \{0, 1, \dots, B-1\}\}; \ee to each $(d_1,\dots,d_D) \in \mathcal{D}_D$ we associate the set $S_{d_1,\dots,d_D}$ of Fibonacci numbers whose significand starts $d_1.d_2d_3\cdots d_D$. With probability $1$, for each $(d_1,\dots,d_D)$ we have $S_{d_1,\dots,d_D} \cap A(p)$ equals $\log_B (d_1.d_2d_3\cdots d_D)$, and thus with probability $1$ Benford's law holds.
\end{cor}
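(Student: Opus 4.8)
The plan is to deduce Corollary \ref{cor:zeckdecomparebenf} directly from Theorem \ref{thm:DensityThm2} by verifying that each set $S_{d_1,\dots,d_D}$ has a well-defined asymptotic density over the Fibonacci numbers equal to $\log_B(d_1.d_2d_3\cdots d_D)$, and then combining the countably many almost-sure statements into one. First I would recall that, by Binet's formula, $F_i = \varphi^i/\sqrt{5} + o(1)$, so $\log_B F_i = i\log_B\varphi - \log_B\sqrt5 + o(1)$; since $\log_B\varphi$ is irrational (here one must note $B$ should be taken so that $\varphi$ is not a rational power of $B$, which holds for all integer bases $B\ge 2$ because $\varphi$ is a unit in $\Z[\varphi]$ that is not a root of unity), Weyl's equidistribution theorem gives that $\{\log_B F_i\}$ is equidistributed modulo $1$. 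The event ``$F_i$ has significand in $[1, s)$'' is the event that the fractional part of $\log_B F_i$ lies in $[0,\log_B s)$, an interval of length $\log_B s$, so equidistribution yields $q(\{F_i : S_B(F_i) < s\}) = \log_B s$. Taking differences of two such sets shows $S_{d_1,\dots,d_D}$, which corresponds to the significand lying in the half-open interval $[d_1.d_2\cdots d_D,\ d_1.d_2\cdots d_D + B^{1-D})$, has asymptotic density $\log_B(d_1.d_2\cdots d_D + B^{1-D}) - \log_B(d_1.d_2\cdots d_D)$; I would then simply relabel this as ``$\log_B(d_1.d_2d_3\cdots d_D)$'' in the sense intended by the statement (the log of the ratio of consecutive significand-window endpoints), which is the standard Benford probability for a block of leading digits.

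Next I would apply Theorem \ref{thm:DensityThm2} to each $S = S_{d_1,\dots,d_D}$: for each fixed tuple $(d_1,\dots,d_D)\in\mathcal D_D$ there is an event $E_{d_1,\dots,d_D}$ of probability $1$ on which $S_{d_1,\dots,d_D}\cap A(p)$ has asymptotic density in $A(p)$ equal to $q(S_{d_1,\dots,d_D})$. Since $\mathcal D_D$ is a finite set (it has $(B-1)B^{D-1}$ elements), the intersection $\bigcap_{(d_1,\dots,d_D)\in\mathcal D_D} E_{d_1,\dots,d_D}$ still has probability $1$, and on this event \emph{every} leading-digit-block density of $A(p)$ simultaneously matches the Benford prediction. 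Because this holds for each $D$, intersecting over the countably many values $D\in\N$ again preserves probability $1$, so with probability $1$ the set $A(p)$ satisfies Benford's law to every finite precision, i.e.\ the significands of the elements of $A(p)$ are equidistributed with respect to the logarithmic distribution; this is exactly the assertion that Benford's law holds.

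I do not expect a serious obstacle here — the corollary is essentially a bookkeeping consequence of Theorem \ref{thm:DensityThm2} together with the classical fact that the Fibonacci numbers are Benford. The one point requiring a little care is the irrationality input: one must confirm $\log_B\varphi\notin\Q$ for the relevant bases, and more pedantically that the $o(1)$ error from Binet's formula does not disturb which side of the interval endpoints finitely many $F_i$ fall on (it does not affect the density, since only finitely many indices are in question and the boundary of the interval has measure zero). The only other mild subtlety is interpreting the phrase ``$S_{d_1,\dots,d_D}\cap A(p)$ equals $\log_B(d_1.d_2d_3\cdots d_D)$'' in the statement as shorthand for ``the asymptotic density of $S_{d_1,\dots,d_D}\cap A(p)$ in $A(p)$ equals the Benford probability of the digit block $d_1d_2\cdots d_D$,'' which is the reading forced by Theorem \ref{thm:DensityThm2}; once that is fixed, the proof is the three-line argument above.
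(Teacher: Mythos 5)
Your proposal is correct and follows essentially the same route as the paper: apply Theorem \ref{thm:DensityThm2} to each of the finitely many digit blocks in $\mathcal{D}_D$, intersect the finitely many probability-$1$ events, and then let $D\to\infty$. The only difference is that you explicitly verify the density hypothesis for $S_{d_1,\dots,d_D}$ via Binet's formula and Weyl equidistribution (and correctly flag the shorthand in the statement's ``equals $\log_B(d_1.d_2\cdots d_D)$''), whereas the paper treats the Benfordness of the Fibonacci numbers as a known input cited in the discussion preceding the corollary.
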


\begin{proof} As $D$ is fixed and finite, there are only finitely many starting blocks for significands in $\mathcal{D}_D$. By Theorem \ref{thm:DensityThm2} for each of these $S_{d_1,\dots,d_D} \cap S(p)$ equals the corresponding Benford probability; as the intersection of finitely many events that each happen with probability $1$ happens with probability $1$, we see that with probability $1$, all the significands of length $D$ happen with the correct probability. Sending $D\to\infty$ yields the desired Benford behavior. \end{proof}

As a check of our Benfordness results, we performed two simple experiments. The first was an exhaustive search of all $m \in [F_{25}, F_{26}) = [121393, 196418)$. We performed a chi-square goodness of fit test on the distribution of first digits of summands for each $m$ and Benford's law. There are eight degrees of freedom, and 99.74\% of the time our chi-square values were below the 95\% confidence threshold of 15.51, and 99.99\% of the time they were below the 99\% confidence threshold of 20.09. We then randomly chose a number in $[10^{60000}, 10^{60001})$, and found a chi-square value of 8.749. See Figure \ref{fig:chiindividualplot} for a comparison between the observed digit frequencies and Benford's law.

\begin{figure}
\begin{center}
\scalebox{.8}{\includegraphics{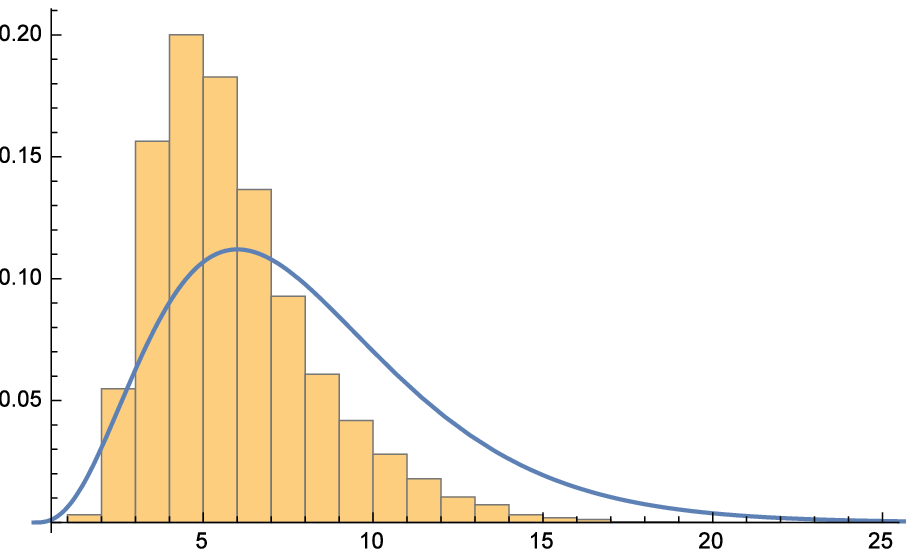}} \ \
\scalebox{.8}{\includegraphics{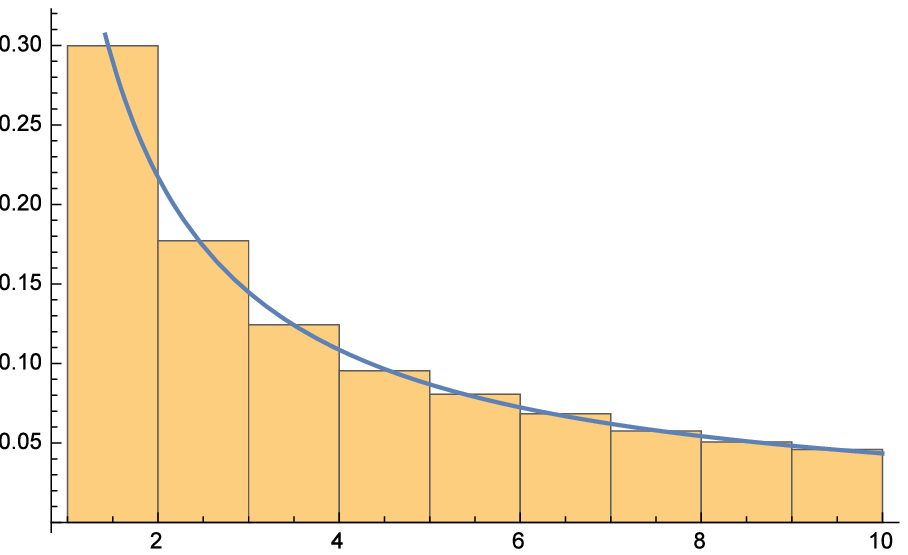}}
\caption{\label{fig:chiindividualplot}  Comparison of the frequencies of leading digits in Zeckendorf decompositions and Benford's law. Left: All integers in $[F_{25}, F_{26})$ (solid curve is a chi-square distribution with 8 degrees of freedom). Right: A large random integer, approximately $7.94 \cdot 10^{60000}$ (solid curve is $1/(x\log(10))$, the Benford density). }
\end{center}
\end{figure}

To prove our main results we first state and prove some lemmas about random legal decompositions. The key observation is that for an appropriate choice of $p$, the set $A(p)$ derived from the random process defined in \eqref{eq:RandomProcess} acts similarly to the Zeckendorf decomposition of a randomly chosen integer $m \in [0, F_{n+1})$. Theorem \ref{thm:DensityThm1} thus becomes a consequence Theorem \ref{thm:DensityThm2}, which we prove through Chebyshev's inequality.

\section{Proof of Theorem \ref{thm:DensityThm1}}\label{sec:benfbehrandzeck}


In this section, we assume the validity of Theorem \ref{thm:DensityThm2} in order to prove Theorem \ref{thm:DensityThm1}. The proof of Theorem \ref{thm:DensityThm2} is given in \S\ref{sec:benfbehaviorrandzeck}. We begin with a useful lemma on the probability our random process $A(p)$ equals $m$; interestingly, we find that $m \in [0, F_{n+1})$ are almost uniformly chosen.

\begin{lem}\label{lem:randprocfindingprobtwocases}
With $A_{n}(p)$ defined as in \eqref{eq:RandomProcess}, $\text{ZD}^{-1}(A_{n}(p)) \in [0, F_{n+1})$ is a random variable. For a fixed integer $m \in [0,F_{n+1})$ with the Zeckendorf decomposition $m = F_{a_1} + F_{a_2} + \cdots + F_{a_k}$, where $k \in \N, \ \ 1 \leq a_1, \ \ a_1 + 1 < a_2, \ \ \ldots, \ \ a_{k-1} + 1 < a_k$, we have
\begin{equation}\label{eq:lemmaexpforzdinverseprobk}
\twocase{\prob{\text{ZD}^{-1} \left(A_{n}(p) \right)  =  m} \ = \ }{p^k(1-p)^{n-2k}}{if $m \in [0, F_n)$}{p^k(1-p)^{n-2k+1}}{if $m \in [F_n, F_{n+1})$.}
\end{equation}
\end{lem}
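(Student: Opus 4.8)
The plan is to unwind the recursion \eqref{eq:RandomProcess} one stage at a time. First I would dispose of the structural claim in the first sentence: by induction on $n$, the set $A_n(p)$ is a subset of $\{F_1,\dots,F_n\}$ that never contains two adjacent Fibonacci numbers, since $F_j$ can only enter at stage $j$ and only when $F_{j-1}\notin A_{j-1}(p)$. Hence $A_n(p)$ is always a legal Zeckendorf decomposition, so $\text{ZD}^{-1}(A_n(p))$ is well defined; and since the largest possible sum of non-adjacent Fibonacci numbers drawn from $\{F_1,\dots,F_n\}$ is $F_{n+1}-1$, we get $\text{ZD}^{-1}(A_n(p))\in[0,F_{n+1})$, so it is a genuine random variable on that interval.

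Now fix $m$ with Zeckendorf decomposition $m=F_{a_1}+\cdots+F_{a_k}$. The event $\{\text{ZD}^{-1}(A_n(p))=m\}$ is precisely $\{A_n(p)=\{F_{a_1},\dots,F_{a_k}\}\}$, and I would compute its probability by tracking, for each stage $j=1,\dots,n$, what must happen conditioned on the process having so far produced $A_{j-1}(p)=\{F_{a_i}:a_i\le j-1\}$. There are three kinds of stages. If $j=a_i+1$ for some $i$ (a \emph{forced} stage), then $F_{j-1}=F_{a_i}\in A_{j-1}(p)$, so $A_j(p)=A_{j-1}(p)$ deterministically; the non-adjacency condition $a_{i-1}+1<a_i$ guarantees $j\notin\{a_1,\dots,a_k\}$, so this is consistent with staying on track, and it contributes a factor $1$. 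If $j=a_i$ for some $i$ (an \emph{adding} stage), then, again by non-adjacency, $a_i-1\notin\{a_1,\dots,a_k\}$, so $F_{j-1}\notin A_{j-1}(p)$ and we are forced onto the probability-$p$ branch; there are exactly $k$ such stages. Every other $j$ is a \emph{skipping} stage: $F_{j-1}\notin A_{j-1}(p)$ and we must not adjoin $F_j$, contributing a factor $1-p$. Since the coin flips at distinct stages are independent, $\prob{\text{ZD}^{-1}(A_n(p))=m}=p^{k}(1-p)^{s}$, where $s$ is the number of skipping stages.

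It remains to count $s$. The three stage types partition $\{1,\dots,n\}$, so $s=n-k-f$, where $f$ is the number of forced stages, i.e. the number of indices $i$ with $a_i+1\le n$; note the $a_i+1$ are distinct and disjoint from the $a_i$. If $m\in[0,F_n)$ then $a_k\le n-1$, so all $k$ of the $a_i+1$ lie in $\{1,\dots,n\}$, giving $f=k$ and $s=n-2k$. If $m\in[F_n,F_{n+1})$ then $a_k=n$, so $a_k+1=n+1$ falls outside the range while $a_1+1,\dots,a_{k-1}+1$ do not, giving $f=k-1$ and $s=n-2k+1$. Substituting into $p^{k}(1-p)^{s}$ yields exactly \eqref{eq:lemmaexpforzdinverseprobk}.

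The only real care needed — the ``hard part'', such as it is — is the bookkeeping in the middle step: checking that the forced, adding, and skipping stages genuinely exhaust $\{1,\dots,n\}$ with no overlaps (which rests entirely on the non-adjacency of the $a_i$), and handling the two boundary effects, namely that stage $1$ is automatically an unconstrained coin flip because $A_0(p)=\emptyset$, and that the behaviour of stage $n$ depends on whether or not $a_k=n$ — which is exactly the dichotomy producing the two cases of the formula. Everything else is a mechanical consequence of the recursion together with the independence of the coin flips.
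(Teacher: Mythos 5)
Your proof is correct and takes essentially the same approach as the paper: both unwind the recursive process and multiply the per-stage transition probabilities, obtaining $k$ factors of $p$ and a count of $(1-p)$ factors that drops by one exactly when $a_k=n$. The paper organizes the product by the gaps $a_j-a_{j-1}$ (each contributing $(1-p)^{a_j-a_{j-1}-2}p$) while you classify individual stages as forced, adding, or skipping, but the bookkeeping is equivalent; you also supply the structural claim that $A_n(p)$ is always legal, which the paper leaves implicit.
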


\begin{proof} With probability $(1 - p)^{a_1 - 1}p$, $F_{a_1}$ is the smallest element of $A_{n}(p)$. For $j \in \Z$, suppose that $F_{a_1}, F_{a_2}, \ldots, F_{a_{j-1}}$ be the $j-1$ smallest elements of $A_{n}(p)$. With probability $(1 - p)^{a_j - a_{j-1} - 2}p$, $F_{a_j}$ is the next smallest element of $A_{n}(p)$; the reason we have a -2 in the exponent is that once we select $F_{a_1}$ we cannot have $F_{a_1+1}$, and thus there are $a_2-a_1-2$ Fibonacci numbers between $F_{a_1+1}$ and $F_{a_2-1}$ which we could have selected (but did not). Continuing,  we find $\text{ZD}^{-1} \left(A_{n}(p) \right) = m$ if and only if the $k$ smallest elements of $A_{n}(p)$ are $F_{a_1}, F_{a_2}, \ldots, F_{a_k}$ and $F_{j} \notin A_{n}(p)$ for $j > a_k$; note if $a_k=n$ then we are done determining if we have or do not have summands, while if $a_k < n$ we must elect not to have $F_{a_k+1}, \dots, F_n$ and thus need another $n-a_k-1$ factors of $1-p$. Then, by these calculations, $\text{ZD}^{-1} \left(A_{n}(p) \right) = m$ with probability
\begin{align} \label{eq:ProbabilityOfRandomZD}
\prob{\text{ZD}^{-1} \left(A_{n}(p) \right) \ = \ m} \ = \ (1-p)^{a_1-1}p \left(\prod_{j=2}^k{(1-p)^{a_j-a_{j-1}-2}p} \right) (1-p)^{n-a_k-\delta_k},  \end{align} where $\delta_k = 1$ if $a_k < n$ and $1$ if $a_k=n$. The first case happens when $m\in [0, F_n)$ and the second when $m \in [F_n, F_{n+1})$; \eqref{eq:lemmaexpforzdinverseprobk} now follows from simple algebra.
\end{proof}

The key idea in proving Theorem \ref{thm:DensityThm1} is to consider the special case of $p = 1/\varphi^2$ in Lemma \ref{lem:randprocfindingprobtwocases}, where $\varphi := \frac{1 + \sqrt{5}}{2}$ is the golden mean.\footnote{For us, the importance of $\varphi$ is that it is the largest root of the characteristic polynomial for the Fibonacci recurrence, and by Binet's formula it governs the growth of the sequence.} The reason this is an exceptionally useful choice is that initially the probability of choosing $m$ in our random process $A(p)$ depends on the \emph{number} of summands of $m$; however, for $p = 1/\varphi^2$ we have $p^k (1-p)^{-2k} = 1$. Thus in this case, for $m$ an integer in $[0, F_{n+1})$ we see that \eqref{eq:ProbabilityOfRandomZD} reduces to
\begin{align}\label{eq:zdinverseprobtwocases}
\twocase{\prob{\text{ZD}^{-1} \left(A_{n}(\varphi^{-2}) \right) = m} \ = \ }{\varphi^{-n}}{if $m\in [0,F_n)$}{\varphi^{-(n + 1)}}{if $m\in [F_n,F_{n+1})$.}
\end{align}
Note this is nearly independent of $m$; all that matters is whether or not it is larger than $F_n$. The desired result follows from straightforward algebra.\footnote{As a quick check, note $F_n \varphi^{-n} + (F_{n+1}-F_n) \varphi^{-(n+1)} = 1$, as required for a probability.}

We now are ready to prove Theorem \ref{thm:DensityThm1}.

\begin{proof}[Proof of Theorem \ref{thm:DensityThm1}]
For a fixed $\varepsilon > 0$, let
\begin{align}
E(n,\varepsilon) \ &:= \ \left\{m\in \Z \ \cap \ [0,F_{n+1}) \ : \ \left|\frac{Y_n(m)}{X_n(m)} \ - \ q(S) \right| \ \geq \ \varepsilon \right\}.
\end{align}

By Theorem \ref{thm:DensityThm2}, for $m$ chosen uniformly at random from the integers in $[0,F_{n+1})$, we have
\begin{align}
\prob{m \in E(n,\varepsilon)} & \ = \ \sum_{x \in E(n,\varepsilon)}{\frac{1}{F_{n+1}}} \nonumber\\
& \ = \ O\left(\sum_{x\in E(n,\varepsilon)}{\prob{\text{ZD}^{-1} \left(A_{n}(\varphi^{-2}) \right) \ = \ x}}\right) \nonumber\\
& \ = \ O\left(\prob{\text{ZD}^{-1} \left(A_{n}(\varphi^{-2}) \right) \in E(n,\varepsilon) }\right)  \ = \ o(1).
\end{align}
We conclude that that $\left|\frac{Y_n(m)}{X_n(m)}-q(S)\right| < \varepsilon$ with probability $1+o(1)$.
\end{proof}


\section{Proof of Theorem \ref{thm:DensityThm2}}\label{sec:benfbehaviorrandzeck}

In this section, we prove Theorem \ref{thm:DensityThm2}. We first prove some useful lemmas.

\begin{lem}
Let $A(p) \subset \{ F_n \}_{n = 1}^{\infty}$ be constructed as in \eqref{eq:RandomProcess} with probability parameter $p \in (0, 1)$. Then
\begin{align}
{\rm Prob}\left(F_k \in A(p) \right) \ &= \ \frac{p}{p + 1} + O(p^k).
\end{align}
\end{lem}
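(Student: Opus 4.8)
The statement to prove is that $\mathrm{Prob}(F_k \in A(p)) = \frac{p}{p+1} + O(p^k)$. Let me denote $x_k := \mathrm{Prob}(F_k \in A(p))$; since membership of $F_k$ in $A(p)$ is decided entirely at stage $k$ of the recursion (once we reach stage $k$, no later stage can remove $F_k$, and $A(p) = \bigcup_n A_n(p)$), we have $x_k = \mathrm{Prob}(F_k \in A_k(p))$. The first step is to extract a recurrence for $x_k$ directly from the defining process \eqref{eq:RandomProcess}: by the rule, $F_k$ gets added precisely when $F_{k-1} \notin A_{k-1}(p)$ and the independent coin (probability $p$) comes up heads. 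Hence
\begin{align}
x_k \ = \ p\,\bigl(1 - x_{k-1}\bigr), \qquad x_1 = p,
\end{align}
using $A_0(p) = \emptyset$ so that $F_1$ is added with probability $p$, i.e. $x_1 = p$ (one should double-check the indexing convention for the base case, but it only affects the implied constant).

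**Solving the recurrence.** This is an affine first-order recurrence, so I would solve it exactly. The fixed point is $x^\ast = p(1-x^\ast)$, giving $x^\ast = \frac{p}{p+1}$, which is exactly the claimed main term. Writing $x_k = \frac{p}{p+1} + e_k$, the error satisfies $e_k = -p\,e_{k-1}$, hence $e_k = (-p)^{k-1} e_1$ with $e_1 = p - \frac{p}{p+1} = \frac{p^2}{p+1}$. Therefore
\begin{align}
x_k \ = \ \frac{p}{p+1} \ + \ \frac{p^2}{p+1}\,(-p)^{k-1},
\end{align}
and since $p \in (0,1)$ we get $|x_k - \frac{p}{p+1}| \le \frac{p^2}{p+1}\, p^{k-1} \le p^{k+1} = O(p^k)$, which establishes the lemma (indeed with a sharper exponent than stated).

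**Anticipated obstacle.** There is essentially no analytic difficulty here — the recurrence is elementary and solves in closed form. The only genuine care needed is the probabilistic justification that $\mathrm{Prob}(F_k \in A(p)) = \mathrm{Prob}(F_k \in A_k(p))$ and that the conditioning in the recurrence is valid, i.e. that the coin flip used at stage $k$ is independent of the event $\{F_{k-1} \notin A_{k-1}(p)\}$. This follows because the process is built by introducing fresh independent randomness at each stage, so the indicator of $F_{k-1} \in A_{k-1}(p)$ is measurable with respect to the coins up through stage $k-1$, while the stage-$k$ decision uses a new coin; conditioning on $F_{k-1}\notin A_{k-1}(p)$ the probability $F_k$ is added is exactly $p$, and conditioning on $F_{k-1}\in A_{k-1}(p)$ it is $0$. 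Stating this cleanly (perhaps as a one-line remark about the filtration generated by the stagewise coins) is the main thing to get right; everything else is the routine calculation above.
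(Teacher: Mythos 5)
Your proof is correct, and it follows the same basic strategy as the paper (condition on the recent past to obtain a linear recurrence for $\mathrm{Prob}(F_k \in A(p))$, then solve it), but the conditioning step is genuinely different. The paper conditions on whether $F_{k-2} \in A(p)$, which yields the second-order affine recurrence $x_k = p^2 x_{k-2} + p - p^2$, and then establishes the closed form $x_k = \sum_{j=1}^{k} (-1)^{j+1}p^j$ by induction. You instead condition on whether $F_{k-1} \in A_{k-1}(p)$, which the defining process makes exactly the right thing to do: $F_k$ is added precisely when $F_{k-1}$ was not added and a fresh coin of bias $p$ succeeds, giving the first-order recurrence $x_k = p(1-x_{k-1})$ with the exact solution $x_k = \frac{p}{1+p} + \frac{p^2}{1+p}(-p)^{k-1}$, which agrees with the paper's partial geometric sum. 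Interestingly, the paper relegates exactly your recurrence to a footnote as a ``heuristic'' for the main term; your observation that the stage-$k$ coin is independent of the event $\{F_{k-1}\notin A_{k-1}(p)\}$ (being fresh randomness relative to the filtration of earlier coins) is what upgrades that heuristic to a rigorous and, in fact, shorter proof. Your route buys a one-step derivation and an exact closed form with no induction; the paper's route avoids having to articulate the independence/filtration point explicitly, since conditioning two steps back lets it quote the conditional probabilities $p$ and $p(1-p)$ directly. Both give the error term $O(p^{k+1})$, slightly better than the stated $O(p^k)$.
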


\begin{proof}
By conditioning on whether $F_{k - 2} \in A(p)$, we obtain a recurrence relation:\footnote{We can also give a simple heuristic suggesting the main term of the answer. For $k$ large, the probability $F_k$ occurs should roughly be the same as the probability that $F_{k-1}$ is used; call this $x$. Then $x \approx (1-x)p$ (to have $F_k$ we must first not have taken $F_{k-1}$, and then once this happens we choose $F_k$ with probability $p$), which implies $x \approx p/(1+p)$ as claimed.}
\begin{align}
{\rm Prob}\left(F_k \in A(p) \right) &\ = \  {\rm Prob}\left(F_k \in A(p) \ | \ F_{k-2} \in A(p) \right) \cdot {\rm Prob}\left(F_{k-2} \in A(p) \right)\nonumber\\ & \ \ \ \ \ \ \  +\  {\rm Prob}\left(F_k \in A(p) \ | \ F_{k-2} \notin A(p) \right) \cdot {\rm Prob}\left(F_{k-2} \notin A(p) \right) \nonumber\\
&\ = \  p \cdot {\rm Prob}\left(F_{k-2} \in A(p) \right) + p (1 - p) \cdot {\rm Prob}\left(F_{k-2} \notin A(p) \right) \nonumber\\
&\ = \  p^2 \cdot {\rm Prob}\left(F_{k-2} \in A(p) \right) + p - p^2.
\end{align}
As ${\rm Prob}\left(F_1 \in A(p) \right) \ = \  p$  and ${\rm Prob}\left(F_2 \in A(p) \right) \ = \  (1-p)p \ = \ p - p^2$, we have
\begin{align}
{\rm Prob}\left(F_k \in A(p) \right) &= \left( {\rm Prob}\left(F_{1} \in A(p) \right)\right)^2 \cdot {\rm Prob}\left(F_{k-2} \in A(p) \right) + {\rm Prob}\left(F_2 \in A(p) \right).
\end{align}
It is easy to show by induction that for all $k$,
\begin{align}
{\rm Prob}\left(F_k \in A(p) \right) \ = \  \sum\limits_{j = 1}^{k}{(-1)^{j + 1} p^j} \ = \ \frac{p}{1 + p} + O(p^k),
\end{align} completing the proof.
\end{proof}

\begin{lem} \label{lem:ExpVarW}
Let $W_n$ be the random variable defined by $W_n := \#A_{n}(p)$. Then
\begin{align}
\mathbb{E}[W_n] \ = \ \frac{n p}{1 + p} + O(1) \ \ \ \  {\rm and} \ \ \ \  {\rm Var}(W_n) \ &= \ O(n).
\end{align}
\end{lem}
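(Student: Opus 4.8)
The plan is to compute $\mathbb{E}[W_n]$ directly from linearity of expectation and then bound $\mathrm{Var}(W_n)$ by controlling the pairwise covariances, which decay geometrically in the index gap. Writing $W_n = \sum_{k=1}^n \mathbf{1}_{F_k \in A(p)}$, the previous lemma gives $\mathbb{E}[\mathbf{1}_{F_k \in A(p)}] = \mathrm{Prob}(F_k \in A(p)) = \frac{p}{1+p} + O(p^k)$, so
\begin{align}
\mathbb{E}[W_n] \ = \ \sum_{k=1}^n \left(\frac{p}{1+p} + O(p^k)\right) \ = \ \frac{np}{1+p} + O\!\left(\sum_{k=1}^\infty p^k\right) \ = \ \frac{np}{1+p} + O(1),
\end{align}
since $p \in (0,1)$ makes the error a convergent geometric series. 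That disposes of the expectation.

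For the variance, the natural decomposition is $\mathrm{Var}(W_n) = \sum_{k=1}^n \mathrm{Var}(\mathbf{1}_{F_k\in A(p)}) + 2\sum_{1 \le j < k \le n} \mathrm{Cov}(\mathbf{1}_{F_j \in A(p)}, \mathbf{1}_{F_k \in A(p)})$. Each variance term is trivially $O(1)$, contributing $O(n)$ in total, so the whole task reduces to showing $\sum_{j<k}|\mathrm{Cov}(\mathbf{1}_{F_j\in A(p)},\mathbf{1}_{F_k\in A(p)})| = O(n)$. The key structural fact is that the process \eqref{eq:RandomProcess} has a short memory: once we know whether $F_{k-1} \in A_{k-1}(p)$, the subsequent decisions are independent of everything that happened before. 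More precisely, I would show there is a constant $c = c(p) \in (0,1)$ with $|\mathrm{Cov}(\mathbf{1}_{F_j \in A(p)}, \mathbf{1}_{F_k \in A(p)})| = O(c^{\,k-j})$; then
\begin{align}
\sum_{1 \le j < k \le n} |\mathrm{Cov}(\mathbf{1}_{F_j\in A(p)}, \mathbf{1}_{F_k\in A(p)})| \ = \ O\!\left(\sum_{j=1}^n \sum_{d=1}^\infty c^{\,d}\right) \ = \ O(n),
\end{align}
which is exactly the claimed bound.

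To establish the geometric covariance decay I would condition on the state of the process at an intermediate index. Fix $j < k$ and let $B$ denote the event $F_{k-1} \in A_{k-1}(p)$; conditioning on the value of $\mathbf{1}_{F_j \in A(p)}$ and on whether $F_{\ell} \in A_\ell(p)$ at some index $\ell$ with $j < \ell < k$, one checks that the conditional distribution of $\mathbf{1}_{F_k \in A(p)}$ only depends on the most recent relevant state, and that the influence of the event at index $j$ on the state at index $\ell$ is damped by a factor like $p^{\ell - j}$ (each intervening step either resets the memory or multiplies the conditional bias by a bounded factor strictly less than $1$). Equivalently, one can set up a two-state Markov chain tracking "$F_i$ chosen" versus "$F_i$ not chosen" — the transition matrix has a spectral gap, so correlations between the indicator at step $j$ and step $k$ decay like the subdominant eigenvalue to the power $k-j$ — and read off $c$ as that eigenvalue. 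The main obstacle is making this mixing estimate rigorous: the process is not literally a time-homogeneous Markov chain on a fixed state space because of the boundary conditions at the start and at index $n$, so some care is needed to argue the one-step-memory property and to extract the uniform geometric factor. Once that covariance bound is in hand, the rest is the bookkeeping sum above, giving $\mathrm{Var}(W_n) = O(n)$.
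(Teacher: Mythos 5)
Your proposal is correct and follows essentially the same route as the paper: both arguments rest on the one-step memory of the process, which the paper packages as the renewal identity $\p{F_j \in A(p) \mid F_i \in A(p)} = \p{F_{j-i-1} \in A(p)}$ for $j \ge i+2$; combined with $\p{F_k \in A(p)} = \frac{p}{1+p} + O(p^k)$ from the preceding lemma, this yields exactly your geometric covariance decay with $c = p$, so the ``mixing obstacle'' you flag is resolved for free. The only organizational difference is that the paper computes $\E[W_n^2]$ directly and subtracts $\E[W_n]^2$ rather than summing covariances, which is the same bookkeeping.
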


\begin{proof}
Define the indicator function $\chi(F_k)$ for $k \in \N$ by
\begin{align}
\twocase{\chi(F_k) \ := \ }{1}{if $F_k \in A(p)$}{0}{if $F_k \notin A(p)$.}
\end{align}

We note that $W_n = \sum_{k = 1}^{n} {\chi(F_k)}$ and by linearity of expectation have
\begin{align}
\mathbb{E}[W_n] &\ = \  \sum\limits_{k = 1}^{n} {\mathbb{E}[\chi(F_k)]} \nonumber\\
&\ = \  \sum\limits_{k = 1}^{n} \prob{F_k \in A(p)} \nonumber\\
&\ = \  \sum\limits_{k = 1}^{n}\left(\frac{p}{1 + p} + O(p^k)\right) \nonumber\\
&\ = \  \frac{n p}{1 + p} + O(1).
\end{align}

To find the variance we use that it equals $\E[W_n^2] - \E[W_n]^2$. Without loss of generality, when we expand below we may assume $i \le j$ and double the contribution of certain terms. As we cannot have $F_i$ and $F_{i+1}$, there are dependencies. While we could determine the variance exactly with a bit more work, for our applications we only need to bound its order of magnitude.
\begin{align}
\mathbb{E}[W_n^2] &\ = \  \E\left[ \left( \sum\limits_{k = 1}^{n}{\chi(F_k)} \right)^2 \right] \nonumber\\
&\ = \  \E\left[ \sum\limits_{i, j \leq n}{\chi(F_i) \cdot \chi(F_j)} \right] \nonumber\\
&\ = \  \sum\limits_{i, j \leq n}{\mathbb{E}[\chi(F_i) \cdot \chi(F_j)]} \nonumber\\
&\ = \ \sum_{i,j\leq n}{\p{F_i\in A(p)}\p{F_j\in A(p)|F_i\in A(p)}} \nonumber\\
&\ = \ \sum_{i\leq n}{\p{F_i\in A(p)}}+2\sum_{i+2\leq j\leq n}{\p{F_i\in A(p)}\p{F_j\in A(p)|F_i\in A(p)}} \nonumber\\
&\ = \ O(n)+2\sum_{i+2\leq j\leq n}{\p{F_i\in A(p)}\p{F_{j-i-1}\in A(p)}} \nonumber\\
&\ = \ O(n)+2\sum_{i+2\leq j\leq n}{\left(\frac{p}{1+p}\right)^2\left(1+O\left(p^{\min(i,j-i)}\right)\right)} \nonumber\\
&\ \leq \ O(n)+\left(\frac{np}{1+p}\right)^2+O\left(\sum_{i+2\leq j\leq n}{p^{\min(i,j-i)}}\right).
\end{align}

For a fixed $k = 1, 2, \ldots, n-1$, there are less than $n$ pairs $(i,j)$ with $k = i < j - i$ and $i + 2\leq j \leq n$.  Similarly, there are less than $n$ pairs $(i,j)$ with $k = i - j \leq i$, $i + 2 \leq j \leq n$.  Therefore, there are less than $2n$ pairs $(i,j)$ for which $\min(i,j-i)=k$.  Thus
\begin{align}
\sum_{i+2\leq j\leq n}{p^{\min(i,j-i)}} \ < \ 2n\sum_{k=1}^{n-1}{p^k}  \ = \ O(n),
\end{align}
and therefore
\begin{align}
\mathbb{E}[W_n^2]  \ = \ \left(\frac{np}{1+p}\right)^2+O(n)  \ = \ \mathbb{E}[W_n]^2+O(n).
\end{align}
We conclude that
\begin{align}
\text{Var}(W_n) \ = \ O(n),
\end{align}
completing the proof.
\end{proof}


\begin{cor} \label{cor:RandomVarW} Let $W_n$ be the random variable defined by $W_n := \#A_{n}(p)$. With probability $1+o(1)$,
\begin{align}
\left|W_n-\frac{np}{1+p}\right| \ <\ n^{2/3}.
\end{align}
\end{cor}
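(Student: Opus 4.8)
The plan is to deduce this directly from Lemma \ref{lem:ExpVarW} via Chebyshev's inequality; no new idea is needed beyond careful bookkeeping.

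First I would record the two quantitative inputs from Lemma \ref{lem:ExpVarW}: there is a constant $C > 0$, independent of $n$, such that $\left|\E[W_n] - \frac{np}{1+p}\right| \le C$ and ${\rm Var}(W_n) \le Cn$ for all $n$. Next, since Chebyshev's inequality controls deviations of $W_n$ from its \emph{mean} $\E[W_n]$, not from the nearby quantity $\frac{np}{1+p}$, I would first pass between the two centerings. For $n$ large enough that $C < \tfrac12 n^{2/3}$, the triangle inequality gives the containment of events
\[
\left\{\left|W_n - \tfrac{np}{1+p}\right| \ge n^{2/3}\right\} \ \subseteq\ \left\{\left|W_n - \E[W_n]\right| \ge \tfrac12 n^{2/3}\right\}.
\]

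Then I would apply Chebyshev's inequality to the right-hand event:
\[
\prob{\left|W_n - \E[W_n]\right| \ge \tfrac12 n^{2/3}} \ \le\ \frac{{\rm Var}(W_n)}{\left(n^{2/3}/2\right)^2} \ \le\ \frac{Cn}{n^{4/3}/4} \ =\ \frac{4C}{n^{1/3}} \ =\ o(1).
\]
Combining with the containment above, $\prob{\left|W_n - \frac{np}{1+p}\right| \ge n^{2/3}} = o(1)$, so the complementary event has probability $1 + o(1)$, which is exactly the assertion of the corollary.

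There is no real obstacle here: the estimate is soft, and the exponent $2/3$ is chosen precisely so that ${\rm Var}(W_n)/(n^{2/3})^2 = O(n^{-1/3}) \to 0$, i.e. any exponent strictly between $1/2$ and $1$ would work. The only point requiring a moment's care is the passage from the centering $\frac{np}{1+p}$ to the true expectation $\E[W_n]$, absorbing the $O(1)$ discrepancy from Lemma \ref{lem:ExpVarW} into (say) half of the $n^{2/3}$ window for large $n$.
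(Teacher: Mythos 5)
Your proof is correct and follows essentially the same route as the paper: both apply Chebyshev's inequality using the mean and variance estimates from Lemma \ref{lem:ExpVarW}, with the only cosmetic difference being that the paper phrases the deviation threshold as $\mathbb{E}[W_n]/n^{4/9}$ while you use $\tfrac12 n^{2/3}$ directly. Your explicit handling of the $O(1)$ discrepancy between $\mathbb{E}[W_n]$ and $\frac{np}{1+p}$ is a point the paper glosses over, and your remark about the admissible range of exponents is accurate.
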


\begin{proof}
This follows immediately from Chebyshev's inequality, as for large $n$ we have
\begin{align}
\p{\left|W_n - \frac{np}{1+p} \right| \geq n^{2/3}} &\ \leq \ \p{\left|W_n - \mathbb{E}[W_n] \right| \geq \frac{\mathbb{E}[W_n]}{n^{4/9}}} \nonumber\\
&\ \leq\  \frac{{\rm Var}(W_n) n^{8/9}}{\mathbb{E}[W_n]^2} \ = \ o(1).
\end{align}
\end{proof}


\begin{lem}
Let $S \ \subset \ \{ F_n \}_{n = 1}^{\infty}$ with asymptotic density $q(S)$ in the Fibonacci numbers. Let $Z_n$ be the random variable defined by $Z_n \ := \ \#{A_n(p) \cap S}$. Then
\begin{align}
\mathbb{E}[Z_n] \ &= \ \frac{n p q(S)}{1 \ + \ p} \ + \ o(n) \nonumber\\
{\rm Var}(Z_n) \ &= \ o(n^2).
\end{align}
\end{lem}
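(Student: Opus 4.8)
The plan is to mirror the proof of Lemma~\ref{lem:ExpVarW}, writing $Z_n$ as a sum of indicator variables restricted to $S$ and computing the first and second moments. Let $\chi(F_k)$ be the indicator that $F_k \in A(p)$ as before, and write $Z_n = \sum_{k=1}^{n} \chi(F_k)\,\mathbf{1}_{F_k \in S}$. By linearity of expectation,
\begin{align}
\mathbb{E}[Z_n] \ = \ \sum_{k \le n,\, F_k \in S} \prob{F_k \in A(p)} \ = \ \sum_{k \le n,\, F_k \in S} \left(\frac{p}{1+p} + O(p^k)\right).
\end{align}
The error terms sum to $O(1)$, and the main term is $\frac{p}{1+p}$ times $\#\{k \le n : F_k \in S\} = n\,q(S,n)$. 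Since $q(S,n) \to q(S)$ by hypothesis, we get $\#\{k \le n : F_k \in S\} = n\,q(S) + o(n)$, hence $\mathbb{E}[Z_n] = \frac{npq(S)}{1+p} + o(n)$ as claimed.

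For the variance I would again use $\mathrm{Var}(Z_n) = \mathbb{E}[Z_n^2] - \mathbb{E}[Z_n]^2$ and expand $\mathbb{E}[Z_n^2] = \sum_{i,j \le n,\, F_i,F_j \in S} \mathbb{E}[\chi(F_i)\chi(F_j)]$, splitting off the diagonal $i=j$ (contributing $O(n)$) and doubling the $i < j$ terms. The key structural input is the same conditional-independence fact used in Lemma~\ref{lem:ExpVarW}: for $i + 2 \le j$, once we know $F_i \in A(p)$, the process beyond index $i$ restarts, so $\prob{F_j \in A(p) \mid F_i \in A(p)} = \prob{F_{j-i-1} \in A(p)} = \frac{p}{1+p} + O(p^{\min(i,\,j-i)})$. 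Thus each off-diagonal term is $\left(\frac{p}{1+p}\right)^2 \bigl(1 + O(p^{\min(i,j-i)})\bigr)$, and summing the main part over all valid $(i,j)$ with both $F_i, F_j \in S$ gives $\left(\frac{p}{1+p}\right)^2 \bigl(n\,q(S) + o(n)\bigr)^2 = \mathbb{E}[Z_n]^2 + o(n^2)$, where I use $(a+o(n))^2 = a^2 + o(n^2)$ when $a = O(n)$. The error terms contribute $O\bigl(\sum_{i+2 \le j \le n} p^{\min(i,j-i)}\bigr) = O(n)$ exactly as in Lemma~\ref{lem:ExpVarW}. Adding up, $\mathbb{E}[Z_n^2] = \mathbb{E}[Z_n]^2 + o(n^2)$, so $\mathrm{Var}(Z_n) = o(n^2)$.

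**The main obstacle**, such as it is, lies in bookkeeping rather than in any new idea: one must be careful that the $o(n)$ error in $\#\{k \le n : F_k \in S\}$ coming from the mere \emph{existence} of the density $q(S)$ (with no rate of convergence) propagates correctly through the squaring in $\mathbb{E}[Z_n^2]$ to yield only an $o(n^2)$ discrepancy, rather than something larger. The point is that $\sum_{i,j \le n,\, F_i,F_j \in S} 1 = \bigl(\#\{k \le n : F_k \in S\}\bigr)^2 = (nq(S) + o(n))^2 = n^2 q(S)^2 + o(n^2)$, which is exactly what is needed; no uniformity in the convergence $q(S,n) \to q(S)$ is required. This weaker variance bound $o(n^2)$, as opposed to the $O(n)$ of Lemma~\ref{lem:ExpVarW}, is all that will be needed for the Chebyshev argument in the proof of Theorem~\ref{thm:DensityThm2}, since there one divides $Z_n$ by $W_n \sim \frac{np}{1+p}$ and only needs $Z_n/W_n \to q(S)$ in probability.
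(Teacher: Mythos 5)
Your proposal is correct and follows essentially the same route as the paper's proof: the same indicator decomposition $Z_n=\sum_k \chi(F_k)\mathbf{1}_{F_k\in S}$, the same use of $\prob{F_k\in A(p)}=\frac{p}{1+p}+O(p^k)$ for the expectation, and the same second-moment expansion with the conditional-independence identity $\prob{F_j\in A(p)\mid F_i\in A(p)}=\prob{F_{j-i-1}\in A(p)}$ to get $\mathbb{E}[Z_n^2]=\mathbb{E}[Z_n]^2+o(n^2)$. Your remark that the lack of a convergence rate for $q(S,n)\to q(S)$ forces only the weaker $o(n^2)$ variance bound is exactly the point the paper makes implicitly in its "second to last line" discussion.
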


\begin{proof}
Define the indicator function $\psi(F_k)$ for $k \in \N$ by
\begin{align}
\twocase{\psi(F_k) \ =\ }{1}{if $F_k \in S$}{0}{if $F_k \notin S$.}
\end{align}
Then we have
\begin{align}
\mathbb{E}[Z_n] \ &= \ \sum_{k=1}^n{\psi(F_k) \prob{F_k\in A(p)}} \nonumber\\
& \ = \ \sum_{k=1}^n{\psi(F_k) \left(\frac{p}{1+p}+O(p^k)\right)} \nonumber\\
& \ = \ O(1)+\frac{p}{1+p}\sum_{k=1}^n{\psi(F_k)} \nonumber\\
& \ = \ \frac{n p q(S)}{1+p}+o(n)
\end{align}
since $\lim_{n\to\infty}{q(S,n)}=q(S)$.

Similarly to the calculation in Lemma \ref{lem:ExpVarW}, we compute
\begin{align}
\mathbb{E}[Z_n^2] & \ = \ \sum_{i,j\leq n}{\psi(F_i) \psi(F_j) \p{F_i\in A(p)}\p{F_j\in A(p)|F_i\in A(p)}} \nonumber\\
& \ = \ O(n)+2\sum_{i+2\leq j\leq n}{\psi(F_i) \psi(F_j) \p{F_i\in A(p)}\p{F_{j-i-1}\in A(p)}} \nonumber\\
& \ = \ O(n)+2\sum_{i+2\leq j\leq n}{\psi(F_i) \psi(F_j) \left(\frac{p}{1+p}\right)^2\left(1+O\left(p^{\min(i,j-i)}\right)\right)} \nonumber\\
& \ = \ O(n)+2\left(\frac{p}{1+p}\right)^2\sum_{i+2\leq j\leq n}{\psi(F_i) \psi(F_j)} \nonumber\\
& \ = \ o(n^2)+\left(\frac{npq(S)}{1+p}\right)^2.
\end{align}

In the calculation above, the only difficulty is in the second to last line, where we argued that the main term of the $i$ and $j$ double sum was $n^2 q(S)^2/2$. To see this, note by symmetry that up to contributions of size $O(n)$ we can remove the restrictions on $i$ and $j$ (and thus have each range from 1 to $n$) if we then take half of the resulting sum. Thus, the restricted double sum becomes $\frac12 \left(\sum_{i \le n} \psi(F_i)\right) \left(\sum_{j \le n} \psi(F_j)\right)$, which as $n\to\infty$ converges to $\frac12 q(S)n \cdot q(S)n$ (up to an error of size $O(n)$, of course). Therefore, we have
\begin{align}
\text{Var}(Z_n)  \ = \ \mathbb{E}[Z_n^2]-\mathbb{E}[Z_n]^2  \ = \ o(n^2),
\end{align}
which completes the proof.
\end{proof}


\begin{cor} \label{cor:RandomVarZ}
Let $Z_n$ be the random variable defined by $Z_n := \# A_n(p) \cap S$, and let $g(n)=n^{1/2}{\rm Var}(Z_n)^{-1/4}$. Then
\begin{align}
\p{|Z_n-\mathbb{E}[Z_n]|>\frac{\mathbb{E}[Z_n]}{g(n)}} &\ \leq\ \frac{{\rm Var}(Z_n)g(n)^2}{\mathbb{E}[Z_n]^2}  \ = \ o(1).
\end{align}
\end{cor}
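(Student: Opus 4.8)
The plan is to obtain the stated bound as an immediate application of Chebyshev's inequality, in direct analogy with the proof of Corollary~\ref{cor:RandomVarW}; the only content is the bookkeeping showing the resulting quantity is $o(1)$, and the definition of $g(n)$ has been rigged precisely so that this bookkeeping goes through.

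First I would apply Chebyshev's inequality to the random variable $Z_n$ with deviation threshold $a = \E[Z_n]/g(n)$, which gives
\[
\p{|Z_n - \E[Z_n]| > \frac{\E[Z_n]}{g(n)}} \ \le \ \frac{{\rm Var}(Z_n)}{\left(\E[Z_n]/g(n)\right)^2} \ = \ \frac{{\rm Var}(Z_n)\, g(n)^2}{\E[Z_n]^2},
\]
which is exactly the inequality asserted in the corollary. It then remains only to check that this last expression tends to $0$.

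Next I would substitute $g(n) = n^{1/2}\,{\rm Var}(Z_n)^{-1/4}$, so that $g(n)^2 = n\,{\rm Var}(Z_n)^{-1/2}$ and the bound collapses to $n\,{\rm Var}(Z_n)^{1/2}/\E[Z_n]^2$. By the lemma immediately preceding this corollary, ${\rm Var}(Z_n) = o(n^2)$, hence ${\rm Var}(Z_n)^{1/2} = o(n)$ and the numerator is $o(n^2)$; the same lemma gives $\E[Z_n] = npq(S)/(1+p) + o(n)$, so when $q(S) > 0$ the denominator is of exact order $n^2$, and dividing shows the ratio is $o(1)$. (As a sanity check that the estimate is useful: since ${\rm Var}(Z_n)^{1/4} = o(n^{1/2})$, the threshold $\E[Z_n]/g(n)$ is itself $o(\E[Z_n])$, so the corollary genuinely pins $Z_n$ to within a $1-o(1)$ factor of its mean with probability $1 - o(1)$.)

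There is essentially no obstacle here, but the one spot a careful reader should flag is the implicit assumption $q(S) > 0$: if $q(S) = 0$ then $\E[Z_n] = o(n)$, the denominator $\E[Z_n]^2$ is only $o(n^2)$, and this particular estimate becomes vacuous. That degenerate case does not require this corollary and can be handled separately by a one-line Chebyshev bound, e.g.\ $\p{Z_n > \varepsilon n} \le {\rm Var}(Z_n)/(\varepsilon n/2)^2 = o(1)$ for all large $n$, using $\E[Z_n] = o(n)$. Apart from that caveat, the corollary is purely a matter of matching orders of magnitude, with the $n^{1/2}$ in $g(n)$ exactly absorbing the $o(n)$ gain hidden in ${\rm Var}(Z_n)^{1/2}$.
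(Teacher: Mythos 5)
Your proposal matches the paper's proof exactly: the paper's entire argument is ``Chebyshev's inequality and the order of magnitude of the various quantities,'' which is precisely the computation you carry out. Your added caveat about the degenerate case $q(S)=0$ (where $\E[Z_n]^2$ fails to be of order $n^2$ and the stated bound is not automatically $o(1)$) is a genuine gap in the paper's own statement that it silently ignores, and your one-line Chebyshev patch for that case is the right fix.
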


\begin{proof}
The proof follows immediately by Chebyshev's inequality and the order of magnitude of the various quantities.
\end{proof}

Armed with the above results, we can now prove our main theorem.

\begin{proof}[Proof of Theorem \ref{thm:DensityThm2}]
Let
\begin{align}
& e_1(n) \ = \ n^{-1/3}, \nonumber\\
& e_2(n) \ = \ \frac{1}{n}\left(\frac{\mathbb{E}[Z_n]}{g(n)}+\left|E[Z_n]-\frac{npq(S)}{1+p}\right| \right).
\end{align}

Note that both are of order $o(1)$.  We combine Corollaries \ref{cor:RandomVarW} and \ref{cor:RandomVarZ} to see that with probability $1+o(1)$ we have
\begin{align}
& Z_n \ \leq \ \frac{npq(S)}{1 + p}(1+e_2(n)),  \nonumber\\
& W_n \ \geq \ \frac{np}{1 + p}(1-e_1(n)).
\end{align}
Therefore, for any $\e>0$ we have with probability $1$ that
\begin{align}
\lim_{n\to\infty}{\frac{Z_n}{W_n}} & \ \leq \ \lim_{n\to\infty}{\frac{q(S)(1+e_2(n))}{1-e_1(n)}}  \ = \ q(S).
\end{align}
A similar argument gives $q(S)$ as a lower bound for $\lim_{n\to\infty} Z_n/W_n$, and thus with probability $1$
\begin{align}
\lim_{n\to\infty}{\frac{Z_n}{W_n}}\ = \ q(S),
\end{align} as desired.
\end{proof}


\section{Conclusion and Future Work}\label{sec:concfuturework}

We were able to handle the behavior of almost all Zeckendorf decompositions by finding a correspondence between these and a special random process, replacing the deterministic behavior for each $m \in [0, F_n)$ with random behavior which is easier to analyze. The key observation was that this correspondence held when choosing $p = 1/\varphi^2$. This allowed us to prove not just Benford behavior for the leading digits of summands in almost all Zeckendorf decompositions, but also similar results for other sequences with density.

In future work we plan on revisiting these problems for more general recurrences, where there is an extensive literature (see among others \cite{Al,Day,DDKMMV,DDKMV,DG,FGNPT,GT,GTNP,Ke,Len,MW1,MW2,Ste1,Ste2}). Similar to other papers in the field (for example, \cite{KKMW} versus \cite{MW1}, or \cite{CFHMN1} versus \cite{CFHMN2}), the arguments are often easier for the Fibonacci numbers, as we have simpler and more explicit formulas at our disposal.


\ \\

\end{document}